\theoremstyle{definition}%definition: upright text, extra space above and below; Definition, Condition, Problem, Example, Exercise,Algorithm, Question, Axiom, Property, Assumption, Hypothesis
\newtheorem{definition}{Definition}[section]
\newtheorem{example}{Example}[section]
\theoremstyle{remark}%remark: upright text, no extra space above or below. Remark, Note, Notation, Claim, Summary, Acknowledgment, Case, Conclusion
\theoremstyle{plain}% plain: italic text, extra space above and below; Theorem, Lemma, Corollary, Proposition, Conjecture, Criterion, Assertion
\newtheorem{theorem}{Theorem}[section]
\newtheorem{corollary}{Corollary}[theorem]
\def\H{\mathcal{H}}
\def\Amor{\phi}
\def\Hmor{I}
\def\limind{\mathop{\oalign{lim\cr\hidewidth$\longrightarrow$\hidewidth\cr}}} %%%%%%%%%%% inductive limit
\newcommand{\compcent}[1]{\vcenter{\hbox{$#1\circ$}}}
\newcommand{\comp}{\mathbin{\mathchoice
{\compcent\scriptstyle}{\compcent\scriptstyle}
{\compcent\scriptscriptstyle}{\compcent\scriptscriptstyle}}} 
\title{On inductive limit spectral triples}
\author{Remus Floricel}
\address{University of Regina, Department of Mathematics, Regina, SK, Canada}
\email{Remus.Floricel@uregina.ca}
\author{Asghar Ghorbanpour} 
\address{Western University, Department of Mathematics, London, ON, Canada}
\email{aghorba@uwo.ca}
 \subjclass[2010]
{Primary 58B34; Secondary
47B07, 46L87}
\keywords{spectral triples, inductive systems, inductive limits}
\thanks{R. Floricel was partially supported by a research grant from NSERC. A. Ghorbanpour was partially supported by a PIMS postdoctoral fellowship, held at the University of Regina.}
\date{\today}
\begin{document}

\maketitle
\begin{abstract}
Given an inductive system of spectral triples $\{(A_j,\H_j,D_j)\}_j$, we find conditions under which the triple $(\limind A_j,\limind H_j,\limind D_j)$ is a spectral triple. We also analyze and describe some classical examples of spectral triples in terms of these conditions.

\end{abstract}

\section{Introduction}
In Alain Connes' noncommutative geometry \cite{Connes1994}, the geometric information carried by a $C^*$-algebra $A$ is deciphered through conversion in spectral information, a process that is obtained with the help of a Dirac-type operator $D$ that acts on the Hilbert space $\H$ on which the given algebra is represented. The constituent objects $A$, $\H$ and $D$ of this process define the notion of spectral triple. More specifically \cite{Connes1989, Connes1994}, a spectral triple $(A,\H,D)$ consists of a unital $C^*$-algebra $A$, a unital faithful $\ast$-representation $\pi$ of $A$ on a Hilbert space $\H$, and a selfadjoint operator $D:Dom(D)\subseteq \H\to\H$ that satisfy the following conditions: \begin{enumerate}
\item [(ST1)] $D$ has compact resolvent $R_\lambda(D)=(D-\lambda I)^{-1}$, $\lambda\in\mathbb{C}\setminus\sigma (D)$; 
\item [(ST2)] the *-algebra $A^\infty$, defined as the set of all elements $a\in A$ with the property that the commutant $[D,\pi(a)]$ is densely defined and extends to a bounded operator on $\H$, is dense in $A$.
\end{enumerate}
In the literature, one can find several examples of spectral triples $(A,\H,D)$ that are built on inductive limit $C^*$-algebras  $A=\limind A_j$, the most relevant to our purpose being Connes' spectral triple on the commutative $C^*$-algebra of all continuous functions on a Cantor set  \cite[Section IV.3.$\epsilon$]{Connes1994}, and Christensen-Ivan's spectral triples on  approximately finite-dimensional ($AF$) $C^*$-algebras  \cite{Christensen-Ivan2006}. A common feature of these apparently unrelated spectral triples is that their Hilbert spaces $\H$ and Dirac operators $D$  can also be realized as inductive limits of Hilbert spaces $\H=\limind \H_j$, respectively of selfadjoint operators $D=\limind D_j$, in such a way that the constituent triples $(A_j,\H_j,D_j)$ are themselves spectral triples. Furthermore, the connecting maps that occur in all these inductive limit constructions are compatible in a natural way, making the systems of spectral triples $\{(A_j,\H_j,D_j)\}_j$ into inductive systems. We therefore infer that all these examples of spectral triples emerge as inductive limits of inductive systems of spectral triples. However, in general, the triple  $$(A=\limind A_j,\H=\limind H_j,D=\limind D_j)$$ obtained from an arbitrary inductive system  $\{(A_j,\H_j,D_j)\}_j$ of spectral triples, and referred to throughout this paper as the inductive realization of  $\{(A_j,\H_j,D_j)\}_j$, is not necessarily a spectral triple. More precisely, it may happen that the operator $D$ does not satisfy either condition (ST1), condition (ST2), or both.

It is the main goal of this work to find conditions under which the inductive realization $(A,\H, D)$ of a countable inductive system of spectral triples  $\{(A_j,\H_j,D_j)\}_j$ is still a spectral triple. The compactness of the resolvent of the operator $D$ is investigated in Theorem \ref{Diracprop}, where several equivalent conditions are discussed. A condition that implies the fulfilment of (ST2) is obtained in Corollary \ref{corfin}. The feasibility of these conditions is then tested both for Connes' spectral triple and for Christensen-Ivan's spectral triple.

At this end of the section, we mention 
%We can not close this general section without mentioning 
%At this end, we mention 
that a construction of spectral triple using inductive limit techniques was explicitly obtained by Aastrup, Grimstrup and Nest in \cite{AGN} with the purpose of formulating a quantization scheme within the framework of noncommutative geometry using quantum gravity \cite{AGN1}. Their construction was later generalized by Lai in \cite{Lai2013}.  Nevertheless, the AGN spectral triple constructed in  \cite{AGN} is semifinite in the sense of \cite{CPS}, and both the framework and the themes addressed in  \cite{AGN} and \cite{Lai2013} are different from ours.

\section{Background: definitions and examples} As a categorical process, the concept of inductive limit of spectral triples requires a functional definition of the notion of morphism of spectral triples. Depending on the intended purpose, several definitions have been proposed in the literature, each of them having its own advantages and disadvantages (see \cite{Bertozzini2012} for a thorough discussion). The use, in this paper, of the following definition of morphism of spectral triples is justified by its occurrence in all concrete examples encountered.
\begin{definition}\label{morph1}
 A morphism between two spectral triples $(A_1,\H_1,D_1)$ and $(A_2,\H_2,D_2)$ is a pair $(\phi,I)$ consisting of a unital $\ast$-homomorphism $\phi:A_1\to A_2$ and a bounded linear operator $I:\H_1\to\H_2$ satisfying the following conditions:\begin{enumerate}
\item $\phi(A_1^\infty)\subseteq A_2^\infty$, where $A_1^\infty$ and $A_2^\infty$ are as in (ST2);
\item $I\pi_1(a)=\pi_2(\phi(a))I$, for every $a\in A_1$;
\item $I(Dom(D_1))\subseteq Dom(D_2)$ and $I D_1=  D_2 I$. 
\end{enumerate}
 A morphism $(\phi,I)$ is said to be isometric if $\phi$ is injective and $I$ is an isometry.
\end{definition}
Using this definition of morphism, one can naturally introduce the notion of inductive system of spectral triples, as follows.
\begin{definition}Let $(J,\leq)$ be a directed index set and $\{(A_j,\H_j,D_j)\}_{j\in J}$ be a family of  spectral triples. 
Suppose that for every $j,\,k\in J$, $j\leq k$, an isometric morphism $(\phi_{j,k},I_{j,k})$ from $(A_j,\H_j,D_j)$ to $(A_k,\H_k,D_k)$ is given such that $\phi_{k,l} \phi_{j,k}=\phi_{j,l}$ and $I_{k,l}I_{j,k}=I_{j,l}$ for all $j,\,k,\,l\in J$, $j\leq k\leq l$. 
The resulting system  $\{(A_j,\H_j,D_j), (\phi_{j,k},I_{j,k}) \}_J$ is called an inductive system of spectral triples. 
\end{definition}
It follows from the previous definition that, given an inductive system of spectral triples $\{(A_j,\H_j,D_j), (\phi_{j,k},I_{j,k}) \}_J$, the constituent systems $\{(\H_j, I_{j,k})\}_J$ and $\{(A_j,\phi_{j,k}) \}_J$ are inductive systems of Hilbert spaces, respectively of $C^*$-algebras.
Let 
\begin{equation}\label{inuctivepieces}
\H=\limind_{j\in J}\H_j\;\;\mbox{and}\;\;A=\limind_{j\in J}A_j%\; \A_\infty=\limind_{j\in \mathbb{I}}\A_j
\end{equation} 
be their inductive limits with associated connecting isomeric operators $I_j:\H_j\to \H$, respectively *-monomorphisms $\phi_j:A_j\to A$,  $j\in J$. We also consider the inductive limit $\pi=\limind_{j\in J}\pi_j$ of the family of representations $\{\pi_j\}_{j\in J}$ associated with the family of spectral triples  $\{(A_j,\H_j,D_j)\}_{j\in J}$. Therefore $\pi$ is the unique representation of the $C^*$-algebra $A$ on $\H$ such that \begin{eqnarray}\label{repo}\pi(\phi_j(a))I_j=I_j\pi_j(a),\;a\in A_j,\,j\in J.\end{eqnarray}
Next, we briefly outline how to construct the inductive limit of the family of operators $\{D_j\}_{j\in J}$. For this purpose, we consider the dense domain $\mathscr{D}$ of $\H$, \begin{eqnarray}\label{dinfty}\mathscr{D}=\bigcup_{j\in J}I_j(Dom(D_j)).\end{eqnarray} For every vector $\xi\in \mathscr{D}$ of the form $\xi=I_j\xi_j$, where $\xi_j\in Dom(D_j)$, define  \begin{eqnarray}\label{ops}D\xi=\Hmor_jD_j\xi_j.\end{eqnarray} 
It follows that  $D$ is a densely defined symmetric operator.
Moreover, since the operators $D_j$ are self-adjoint, we have that $Range(D_j\pm i)=\H_j$, for every $j\in J$. 
Consequently,  $Range(D\pm i$) is dense in $\H$, and thus $D$ is essentially selfadjoint.  

We shall use the same letter $D$, or the symbol $D=\limind D_j$, to denote the closure of this essentially selfadjoint operator, and call it the inductive limit of the family of operators $\{D_j\}_{j\in J}$. Therefore $D$ is a selfadjoint operator of which domain $Dom (D)$ contains $\mathscr{D}$, and hence $$I_j(Dom(D_j))\subseteq Dom(D),$$ for every $j\in J$. In particular, if $P_j$ is the orthogonal projection of $\H$ onto $I_j(\H_j)\simeq \H_j$, then $P_j (Dom(D))\subseteq Dom(D)$, and the operators $P_j$ and $D$ commute with each other.

Putting together all the elements defined above, we come to the next definition.
\begin{definition}
The triple $(A,\H,D)$ is called the inductive realization of the inductive system $\{(A_j,\H_j,D_j), (\phi_{j,k},I_{j,k}) \}_{J}$.
\end{definition}

The following examples illustrate the concepts introduced above.

\begin{example}\label{Co}(cf. \cite[Section IV.3.$\epsilon$]{Connes1994}.) Connes' spectral triple associated with a Cantor set can be described alternatively as the inductive realization of an inductive system of finite dimensional spectral triples, as follows.

Consider a Cantor set $\Lambda \subset \mathbb{R}$, i.e., a totally disconnected compact Hausdorff space with no isolated points. Set $x_{0,+}=\min\Lambda$, $x_{0,-}=\max\Lambda$, and $\ell_0=x_{0,-}-x_{0,+}$. Let $\{(x_{n,-},x_{n,+})\}_{n\in\mathbb{N}}$ be a sequence of disjoint open intervals of lengths $\ell_n=x_{n,+}-x_{n,-}$ decreasing to zero such that 
$$\Lambda=[x_{0,+},x_{0,-}]\backslash \bigcup_{n=1}^\infty (x_{n,-},x_{n,+}).$$ 

Let $j\in\mathbb{N}$ be a fixed integer. Consider the subset $\Lambda_j=\{x_{n,+}\}_{0\leq n\leq  j}$ of $\Lambda$, and the continuous function $\theta_j:\Lambda\to\Lambda_j$, $$\theta_{j}(x)=\max \{x_{n,+}\,|\, x_{n,+}\leq x,\,0\leq n\leq j \}.$$ For any $k\in\mathbb{N}$, $k\geq j$, we denote with $\theta_{j,k}$ the restriction of $\theta_{j}$ to $\Lambda_k$. Then $\theta_{j,k}\theta_k=\theta_j$, and  $\theta_{j,k}\theta_{k,l}=\theta_{j,l}$ for every $l\geq k\geq j$, as one can readily see.

Let $A=C(\Lambda)$ be the $C^*$-algebra of all continuous functions on $\Lambda$, $A_j=C(\Lambda_j)$, and  $\varphi_{j}:A_j\to A$ be the *-homomorphism induced by $\theta_{j}$, i.e.  $\phi_{j}(f)=f\comp \theta_{j}$. For $k\in\mathbb{N}$, $k\geq j$, let also $\phi_{j,k}:A_j\to A_k$ be  the  the *-homomorphism induced by $\theta_{j,k}$. Then $\phi_{k}\phi_{j,k}=\phi_{j}$ and $\phi_{j,k}\phi_{k,l}=\phi_{j,l}$, for every $l\geq k\geq j$. Moreover, the *-algebra $\bigcup_{j\in\mathbb{N}}\Amor_{j}(A_j)$ is dense in $A$, by Stone-Weierstrass' theorem. Consequently $\{(A_j,\phi_{j,k}) \}_{\mathbb{N}}$ is an inductive system of $C^*$-algebras and $A=\limind A_j$. \\
\indent Next, we consider the sets $E_j=\{x_{n,\pm}\}_{0\leq n \leq j }$ and  $E=\{x_{n,\pm}\}_{n\in\mathbb{N}}$, and  the associated Hilbert spaces $\mathcal{H}_j=\ell^2\big(E_j)$ and $\mathcal{H}=\ell^2\big(E)$. Then $\mathcal{H}=\limind \mathcal{H}_j$, where the connecting isometries $I_{j,k}:\H_j\to H_k$ are given by inclusion, i.e.,  $(I_{j,k}\xi)(x_{n,\pm})=\xi(x_{n,\pm})$ if $n\leq j$, and $(I_{j,k}\xi)(x_{n,\pm})=0$ if $j<n\leq k$, for every $\xi\in \H_j$. 
We also consider the representation $\pi_j$ of the $C^*$-algebra $A_j$ on the Hilbert space $\H_j$, defined as $$(\pi_j(f)(\xi))(x_{n,\pm})=f(\theta_j(x_{n,\pm}))\xi(x_{n,\pm}),$$ for all $f\in A_j$, $\xi\in\mathcal{H}_j$, and $0\leq n\leq j,$ and the operator $D_j:\mathcal{H}_j\to\mathcal{H}_j,$  \begin{eqnarray}\label{dj}D_j(\xi)(x_{n,\pm})=\frac{1}{\ell_n} \xi(x_{n,\mp}),\end{eqnarray} defined for all $\xi\in\mathcal{H}_j$, and $0\leq n\leq j.$ Note that if $\pi:A\to \mathcal{B}(\H)$ is the representation $$ (\pi(f)(\xi))(x)=f(x)\xi(x),$$  defined for all $f\in A$, $\xi\in H$ and $x\in E$, then $\pi=\limind \pi_j$, and $\|D_j\|=\frac{1}{\ell_j}$. 

Putting all the elements defined above together, we notice that the system $\{(A_j,\H_j,D_j), (\phi_{j,k},I_{j,k}) \}_{\mathbb{N}}$ is an inductive system of finite dimensional spectral triples. The inductive realization of this inductive system is precisely Connes' spectral triple $(C(\Lambda), \ell^2(E), D)$ constructed in \cite[Section IV.3.$\epsilon$]{Connes1994} (see also \cite{Connes-Marcolli}). The Dirac operator $D$ acts as in (\ref{dj}), i.e. $D(\xi)(x_{n,\pm})=\frac{1}{\ell_n} \xi(x_{n,\mp})$, for every function $\xi:E\to \mathbb{C}$ of finite support and $n\in \mathbb{N}$, and the algebra $A^\infty$ is simply the algebra of locally constant functions on $\Lambda$.

At this end, we also note that the spectral triple $(C(\Lambda), \ell^2(E), D)$ is even with $\mathbb{Z}/2\mathbb{Z}$-grading $\gamma : \ell^2(E)\to  \ell^2(E)$, $\gamma(\xi)(x_{n,\pm})=\xi(x_{n,\mp}),$ and the operator $\gamma$ can also be realized as the inductive limit of a sequence of  $\mathbb{Z}/2\mathbb{Z}$-grading operators $\gamma_j : \H_j\to  \H_j$, defined similarly. To keep things simple, in this article we will not discuss in detail inductive limits of inductive systems of even spectral triples. It is clear, however, that all the concepts defined above can be adjusted with ease to address this situation.
\end{example}

\begin{example}\label{ChIv}(cf. \cite{Christensen-Ivan2006}). Christensen and Ivan have constructed in \cite{Christensen-Ivan2006} a spectral triple for unital $AF$-algebras which, when considered on commutative $AF$-algebras, differs in several ways from Connes' spectral triple discussed in the previous example. The Christensen-Ivan spectral triple was constructed as the result of an effective utilization of the inductive structure of an $AF$-algebra, and can be easily described as the inductive realization of an inductive system of finite dimensional spectral triples, as follows. 

Let $A$ be a unital $AF$-algebra, $\{A_j\}_{j\geq 0}$ be an increasing sequence $$A_0=\mathbb{C}\cdot 1\subseteq A_1 \subseteq  \cdots\subseteq A$$ of  
finite dimensional $C^\ast$-algebras of which union is dense in $A$, and $\{\alpha_j\}_{j\in\mathbb{N}}$ be a sequence of non-zero real numbers. Consider a faithful state $\tau$ of $A$. Let $(\pi, \H)$ be the the associated GNS representation with cyclic (and separating) vector $\xi\in \H$, and $\eta:A\to \H$ be the mapping  $\eta(a)=\pi(a)\xi$, $a\in A$. For each positive integer $j$, consider the finite dimensional subspace $\H_j=\eta(A_j)$ of $\H$, the representation $\pi_j$  of $A_j$ on $\H_j$, $\pi_j(a)\eta(b)=\eta(ab)$, for all $a,\,b\in A_j$, and the orthogonal projection $P_j$ of $\H$ onto $\H_j$. Let also $D_j:\H\to \H$ be the operator  $$D_j=\sum_{i=1}^j\alpha_i(P_i-P_{i-1}).$$ It is clear that each Hilbert space $\H_k$ is invariant under $D_j$, and $D_j\restriction_{\H_j}=D_k\restriction{\H_j}$ for all $j\leq k$. We deduce that the system $\{(A_j,\H_j,D_j\restriction _{\H_j}), (\phi_{j,k},I_{j,k}) \}_{\mathbb{N}}$ is an inductive system of finite dimensional spectral triples, where $\phi_{j,k}$ and $I_{j,k}$ are the inclusion maps. The inductive realization $(A,\H,D)$ of this system gives the Christensen-Ivan spectral triple. \end{example}

\section{Inductive realizations as spectral triples}
Throughout this section, we consider a countable inductive system of spectral triples $\{(A_j,\H_j,D_j), (\phi_{j,k},I_{j,k}) \}_{\mathbb{N}}$ and its inductive realization $(A,\H,D)$. As in the previous section, we also consider the increasing sequence $\{P_j\}_{j\in\mathbb{N}}$ of orthogonal projections $P_j$ of $\H$ onto $I_j(\H_j)\simeq \H_j$, where $I_j:\H_j\to \H$ are the connecting isometries of the inductive system of Hilbert spaces $\{(\H_j,I_{j,k}) \}_{j\leq k}$ satisfying $I_kI_{j,k}=I_j$, for all $k\geq j$, and $\bigcup_{i\in\mathbb{N}}I_j(\H_j)$ is dense in $\H$.

In the first part of this section, we introduce and analyze conditions under which the operator $D$ has compact resolvent. Our strategy for achieving this goal starts from the observation that the operator $D$ can be realized as the limit of the sequence $\{I_jD_jI_j^*\}_{j\in\mathbb{N}}$ in the strong resolvent sense, i.e., $${\mbox{\tiny{SOT}}}-\lim_{j\to\infty}R_\lambda (I_jD_jI_j^*)=R_\lambda(D)$$ for $\lambda\in\mathbb{C}\setminus\mathbb{R}$, a property that has also been noted in \cite{Lai2013}.  In particular, it follows from \cite[Theorem VIII.20(b)]{Reed-Simon1980} that the sequence $\{f(I_jD_jI_j^*)\}_{j\in\mathbb{N}}$ converges strongly to $f(D)$, for every bounded continuous function $f$ on $\mathbb{R}$.

The convergence of the sequence $\{I_jD_jI_j^*\}_{j\in\mathbb{N}}$ to $D$ in the strong resolvent sense can be easily seen. Indeed, since  $P_j\uparrow 1_\H$ strongly, it follows that for every $\lambda\in\mathbb{C}\setminus\mathbb{R}$, we have 
\begin{equation}\label{sotconver}
I_jR_\lambda(D_j)I_j^*=P_jR_\lambda(D)P_j\overset{\rm \tiny{SOT}}{\rightarrow} R_\lambda(D). 
\end{equation}  
Moreover, because $R_\lambda(I_jD_jI_j^*)=I_jR_\lambda(D_j)I_j^*+\lambda P_j^\perp,$
%$(I_jD_jI_j^*-\lambda)^{-1}=I_j(D_j-\lambda)^{-1}I_j^*+\lambda(I-P_j),$ 
for every $j\in\mathbb{N}$, we deduce that  the sequence $\{I_jD_jI_j^*\}_{j\in\mathbb{N}}$ converges to $D$ in the strong resolvent sense. In this regard, we also notice that the sequence $\{I_jD_jI_j^*\}_{j\in\mathbb{N}}$ converges to $D$ in the norm resolvent sense if and only if $I_j$ is a unitary operator, for some $j\in \mathbb{N}$. Consequently, this kind of convergence is far too strong for the level of generality we aim to maintain in this work.

To ensure that condition (ST1) is met, we therefore need to approximate the operator $D$ with the given sequence $\{D_j\}_{j\in\mathbb{N}}$ through a different kind of convergence, which should be stronger than convergence in the strong resolvent sense, but weaker than convergence in the norm resolvent sense. Equation (\ref{sotconver}) offers a clear indication of the type of convergence required by replacing the strong operator topology with the uniform topology, as discussed in the theorem below.
\begin{theorem} \label{Diracprop}
Let $\{(A_j,\H_j,D_j), (\phi_{j,k},I_{j,k}) \}_{\mathbb{N}}$ be an inductive system of spectral triples with inductive realization $(A,\H,D)$. The following conditions are equivalent.
\begin{enumerate}
\item[{(\em i})] $D$ has compact resolvent;
 \item[({\em ii})] the sequence $\{I_jR_\lambda(D_j)I_j^*\}_{j\in\mathbb{N}}$ converges uniformly to $R_\lambda(D)$, for every $\lambda\in\mathbb{C}\setminus\mathbb{R}$;
  \item[({\em iii})] the sequence $\{I_jR_\lambda(D_j)I_j^*\}_{j\in\mathbb{N}}$ converges uniformly to $R_\lambda(D)$, for some $\lambda\in\mathbb{C}\setminus\mathbb{R}$;
  \item[({\em iv})] the sequence $\{I_jf(D_j)I_j^*\}_{j\in\mathbb{N}}$ converges uniformly to $f(D)$, for every continuous function $f$ on $\mathbb{R}$ vanishing at infinity.
\end{enumerate}
\end{theorem}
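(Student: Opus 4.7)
The plan is to prove the equivalences cyclically as $(ii)\Rightarrow(iii)\Rightarrow(i)\Rightarrow(iv)\Rightarrow(ii)$, pivoting on the functional-calculus identity
\[
I_j f(D_j) I_j^{\ast} \;=\; f(D) P_j \;=\; P_j f(D) P_j,
\]
valid for every $f \in C_0(\mathbb{R})$.  First I would derive this identity: the intertwining $D I_j \xi_j = I_j D_j \xi_j$ for $\xi_j \in Dom(D_j)$ forces $I_j R_\lambda(D_j) = R_\lambda(D) I_j$ whenever $\lambda \in \mathbb{C}\setminus \mathbb{R}$, because $D_j - \lambda$ maps $Dom(D_j)$ bijectively onto $\H_j$.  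Since $C_0(\mathbb{R})$ is generated as a $C^{\ast}$-algebra by the resolvent function $x \mapsto (x-\lambda)^{-1}$, this bootstraps to $I_j f(D_j) = f(D) I_j$ for every $f \in C_0(\mathbb{R})$; post-composing with $I_j^{\ast}$ and using $I_j I_j^{\ast} = P_j$ together with the commutation $P_j D = D P_j$ recorded in Section~2 then yields the displayed identity.

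The implication $(ii) \Rightarrow (iii)$ is immediate.  For $(iii) \Rightarrow (i)$, I would note that since each $(A_j, \H_j, D_j)$ is a spectral triple the operator $R_\lambda(D_j)$ is compact; hence so is $I_j R_\lambda(D_j) I_j^{\ast}$, and because a uniform limit of compact operators is compact, $R_\lambda(D)$ is compact and $(i)$ follows.  For $(iv)\Rightarrow(ii)$ I would simply specialize $f$ to $x\mapsto (x-\lambda)^{-1}$, which lies in $C_0(\mathbb{R})$ for every $\lambda \in \mathbb{C}\setminus\mathbb{R}$.

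The only substantive link is $(i) \Rightarrow (iv)$.  Assuming $R_\lambda(D)$ compact, I would first observe that $f(D)$ is compact for every $f \in C_0(\mathbb{R})$: the continuous functional calculus sends $C_0(\mathbb{R})$ into the $C^{\ast}$-subalgebra generated by $R_\lambda(D)$, which is contained in the norm-closed ideal of compact operators.  It then remains to prove the standard approximation lemma: for any compact operator $K$ and any sequence of projections with $P_j \to 1_{\H}$ in the strong operator topology, one has $\|P_j K P_j - K\| \to 0$.  This follows because strong convergence is uniform on the relatively compact set $K(\{\xi : \|\xi\|\le 1\})$, so $\|P_j K - K\|\to 0$; passing to adjoints gives $\|K P_j - K\|\to 0$; the triangle inequality then supplies the claim.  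Applied to $K = f(D)$ in conjunction with the pivot identity, this yields $(iv)$.

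The hard part will be formulating the pivot identity cleanly enough that the rest of the argument reduces to two standard facts — a uniform limit of compacts is compact, and every compact operator is well approximated from both sides by a strongly convergent sequence of projections — rather than to ad hoc computations on vectors in $\bigcup_j I_j(\H_j)$.
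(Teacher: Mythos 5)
Your proposal is correct, and it takes a genuinely different route from the paper's. The paper proves $(i)\Rightarrow(ii)$ by expanding the compact resolvent in its eigenprojections, $R_\lambda(D)=\sum_n(\lambda_n-\lambda)^{-1}Q_{\lambda_n}$, using that each finite-rank $Q_{\lambda_n}$ is eventually absorbed by $P_j$, and deriving the exact formula $\|I_jR_\lambda(D_j)I_j^*-R_\lambda(D)\|=\sup\{|\lambda_n-\lambda|^{-1}: P_jQ_{\lambda_n}\neq Q_{\lambda_n}\}$; it then gets $(ii)\Rightarrow(iv)$ by Stone--Weierstrass approximation of $f$ by polynomials in the two resolvent variables, together with a multiplicativity identity for the compressions $I_j(\cdot)I_j^*$. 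You instead isolate the single identity $I_jf(D_j)I_j^*=P_jf(D)P_j$ for all $f\in C_0(\mathbb{R})$ (correctly derived: $I_jR_\lambda(D_j)=R_\lambda(D)I_j$ from the intertwining, then density of the $*$-algebra generated by the resolvent function plus norm-continuity of the functional calculus), after which everything reduces to two soft facts --- a uniform limit of compacts is compact, and $\|P_jKP_j-K\|\to 0$ for $K$ compact and $P_j\to 1$ strongly. Your argument is shorter, more modular, and proves $(i)\Rightarrow(iv)$ directly without passing through $(ii)$; what the paper's computation buys in exchange is the explicit norm formula \eqref{norm}, which is exactly what is invoked in the ``revisited'' examples at the end of the paper to compute $\|I_jR_\lambda(D_j)I_j^*-R_\lambda(D)\|$ as a supremum over tail eigenvalues. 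One cosmetic point: in $(iii)\Rightarrow(i)$ you obtain compactness of $R_\lambda(D)$ only at the given $\lambda$, so you should add the one-line remark that the first resolvent identity propagates compactness to every point of the resolvent set (the paper makes the same silent use of this in its $(iii)\Rightarrow(ii)$ step).
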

\begin{proof}
$(i)\Rightarrow (ii).$ Let $\lambda\in\mathbb{C}\setminus\mathbb{R}$ be fixed. Since the resolvent $R_\lambda(D)$ is a compact operator, we have
$R_\lambda(D)=\sum_n (\lambda_n-\lambda)^{-1} Q_{\lambda_n},$
where $Q_{\lambda_n}$ is the eigenprojection for the eigenvalue $\lambda_n$ of $D$, and $\lim_n|\lambda_n-\lambda|^{-1}=0$.
Each projection $P_j$ commutes with $D$ so it also commutes with any of the eigenprojections  $Q_{\lambda_n}$. One can therefore consider the (possibly zero) projections $Q_{j,\lambda_n}:=P_j Q_{\lambda_n}\leq Q_{\lambda_n}$. Since  $P_j\uparrow 1_\H$ strongly and  $Q_{\lambda_n}$ have finite ranks, we deduce that $Q_{j,\lambda_n}=Q_{\lambda_n}$ for $j$ large enough. Moreover, since $DI_j=I_jD_j$, we obtain
\begin{eqnarray}\label{norm}
\|I_j R_\lambda(D_j)I_j{}^*-R_\lambda(D)\|
&=&\sup\left.\left\{|\lambda_n-\lambda|^{-1}\, \right| {Q_{j,\lambda_n}\neq Q_{\lambda_n}}\right\}. 
\end{eqnarray}   
For any $\epsilon>0$, there exists $m$ such that  $|\lambda_n-\lambda|^{-1}<\epsilon$, for every $n>m$.
On the other hand, for $j$ large enough,  we have $Q_{j,\lambda_k}=Q_{\lambda_k}$ for all $k\leq m $. In particular, it follows from \eqref{norm} that  $\|I_j R_\lambda(D_j)I_j{}^*-R_\lambda(D)\|<\epsilon$, for $j$ large enough. \\
$(ii)\Rightarrow (iii) $ is obvious.\\ $(iii)\Rightarrow (ii)$ follows from the resolvent identity.\\
$(ii)\Rightarrow (iv).$ Let  $f$ be a continuous function vanishing at infinity, $\epsilon>0$, and $\lambda\in\mathbb{C}\setminus\mathbb{R}$. Using the Stone-Weierstrass theorem, one can find a polynomial $p$ in two variables, with no constant term, such that $\|f-p_\lambda\|<\frac{\epsilon}{3}$, where $p_\lambda(x)=p(1/(x-\lambda),1/(x-\overline{\lambda}))$, for every real number $x$. Using functional calculus, we obtain$$\| f(D) -p_\lambda(D)\|< \frac{\epsilon}{3}\;\mbox{and}\;  \| f(D_j) -p_\lambda(D_j)\|<\frac{\epsilon}{3},$$
for all $j\in\mathbb{N}$.  
In particular, 
 $\|I_j f(D_j)I_j{}^*-I_j p_\lambda(D_j)I_j{}^*\|<\frac{\epsilon}{3},$ for every $j$.
Note that  \begin{eqnarray*}I_jp_\lambda(D_j)I_j^*&=&I_jp\big(R_\lambda(D_j),R_{\overline{\lambda}}(D_j)\big)\Hmor_j{}^*\\&=&p\big(I_jR_\lambda(D_j)I_j{}^*,I_jR_{\overline{\lambda}}(D_j)I_j{}^*\big),\end{eqnarray*}
 because $I_j\big(R_\lambda (D_j)\big)^{m}\big(R_{\overline{\lambda}}(D_j)\big)^{n}I_j{}^*=\Big(I_j R_\lambda(D_j)I_j{}^*\Big)^m\Big(I_jR_{\overline{\lambda}}(D_j)I_j{}^*\Big)^n$ for all $m,\,n\neq 0$. Moreover, because the sequences $\{I_jR_\lambda(D_j)I_j{}^*\}_{j\in\mathbb{N}}$ and $\{I_jR_ {\overline{\lambda}}(D_j)I_j{}^*\}_{j\in\mathbb{N}}$ converge uniformly to the resolvents $R_\lambda(D)$ and $R_{\overline{\lambda}}(D)$ respectively, one can choose $j$ large enough so that 
$$\| I_jp_\lambda(D_j)I_j{}^*-p_\lambda(D)\|<\frac{\epsilon}{3}.$$ Consequently $\|f(D)-\Hmor_jf(D_j)\Hmor_j^* \|<\varepsilon$, for $j$ large enough.\\
$(iv)\Rightarrow (i).$ Let $\lambda\in\mathbb{C}\setminus\mathbb{R}$ and $f(x)=1/(x-\lambda)$, for every real number $x$. Then the sequence of compact operators $\{I_jf(D_j)I_j^*\}_{j\in\mathbb{N}}$ converges uniformly to $R_\lambda(D)$, so the resolvent  $R_\lambda(D)$ must be a compact operator as well.\end{proof}

In the following, we will focus on investigating the validity of condition (ST2) in the case of an inductive realization.
Our strategy consists of comparing the algebra $A_\infty:=\bigcup_{j\in\mathbb{N}} \phi_j(A_j^\infty)$ to the algebra $A^\infty$, defined in (ST2), two algebras that may be unrelated.
For this purpose, we first notice that for any $a$ in some $A_j^\infty$, the operator $\pi(\phi_j(a))$ preserves $I_j(Dom(D_j))$ because $\pi_j(a)$ preserves $Dom(D_j)$.
Therefore, $\pi(\phi_j(a))$ preserves the space $\mathscr{D}$, defined in (\ref{dinfty}), and thus the commutator $[D,\pi(\phi_j(a))]$ is a densely defined operator.

Secondly, we notice that if $a\in A_j^\infty$, for some $j\in\mathbb{N}$, then for every $k\geq j$ one has
\begin{eqnarray*}
[D,\pi(\phi_j(a))] I_k 
&=& DI_k\pi_k(\phi_{j,k}(a))-\pi(\phi_k\comp \phi_{j,k}(a))I_k D_k\\
&=&  I_k D_k  \pi_k(\phi_{j,k}(a))-I_k \pi_k(\phi_{j,k}(a)) D_k\\
&=&  I_k [D_k, \pi_k(\phi_{j,k}(a)].
\end{eqnarray*}
This, on one hand, shows that   
\begin{equation}\label{firstcommutequ}
I_k^*[D,\pi(\phi_j(a))] I_k=[D_k, \pi_k(\phi_{j,k}(a)]\in \mathcal{B}(\H_k).
\end{equation}
On the other hand, it implies that for every selfadjoint element $a\in A_j^\infty$ the projection  $P_k=I_kI_k{}^\ast$, for $k\geq j$, commutes with the commutator $[D,\pi(\phi_j(a))]$;
\begin{equation}
[D,\pi(\phi_j(a))]P_k=P_k[D,\pi(\phi_j(a))], \quad a=a^*\in A_j^\infty.
\end{equation} 
We also infer from (\ref{firstcommutequ}) that $I_{\ell,k}^*[D_k, \pi_k(\phi_{j,k}(a)]I_{\ell,k}=[D_\ell, \pi_\ell(\phi_{j,\ell}(a)],$ for every $j\leq \ell\leq k$, and therefore the sequence $\{\|[D_k,\pi_k(\phi_{j,k}(a))]\|\}_{j\geq k}$ is increasing. The convergence of this sequence, for every  $a\in A_j^\infty$, provides a necessary and sufficient condition that ensures the inclusion  $\phi_j(A_j^\infty)\subset A^\infty$, as shown in the following theorem.

\begin{theorem}\label{boundedcomutant}
Let $\{(A_j,\H_j,D_j), (\phi_{j,k},I_{j,k}) \}_\mathbb{N}$ be an inductive system of spectral triples. Let $j\in\mathbb{N}$ and $a\in A_j^\infty$. Then the operator $[D, \pi(\phi_{j}(a))]$ is bounded if and only if 
 the family of operators $\{[D_k,\pi_k(\phi_{j,k}(a))]\}_{k\geq j}$ is uniformly bounded.
\end{theorem}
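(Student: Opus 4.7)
The plan is to exploit the intertwining identity already established in equation (\ref{firstcommutequ}), namely
\[
I_k^*[D,\pi(\phi_j(a))] I_k = [D_k, \pi_k(\phi_{j,k}(a))], \qquad k\geq j,
\]
and to leverage the fact that each $I_k$ is an isometry so that conjugation by $I_k$ neither increases nor allows control beyond the operator norm.

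For the forward direction, I would simply observe that if $[D, \pi(\phi_j(a))]$ extends to a bounded operator on $\H$, then applying the isometry bound $\|I_k^* X I_k\|\leq \|X\|$ to the identity above yields $\|[D_k, \pi_k(\phi_{j,k}(a))]\|\leq \|[D,\pi(\phi_j(a))]\|$ uniformly in $k\geq j$, which is exactly uniform boundedness (and in fact recovers the already-noted monotonicity of the sequence).

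For the reverse direction, set $M:=\sup_{k\geq j}\|[D_k, \pi_k(\phi_{j,k}(a))]\|<\infty$. The plan is to show that $[D,\pi(\phi_j(a))]$ is bounded by $M$ on the dense domain $\mathscr{D}$ defined in (\ref{dinfty}). Given any $\xi\in\mathscr{D}$, one can write $\xi=I_k\xi_k$ with $\xi_k\in Dom(D_k)$ for some $k\geq j$ (enlarging the index if necessary via the relation $I_\ell = I_k I_{\ell,k}$ for $\ell\leq k$). The calculation displayed just before (\ref{firstcommutequ}) gives
\[
[D,\pi(\phi_j(a))]\,I_k\xi_k = I_k\,[D_k,\pi_k(\phi_{j,k}(a))]\,\xi_k,
\]
and since $I_k$ is an isometry we immediately obtain
\[
\|[D,\pi(\phi_j(a))]\,\xi\| = \|[D_k,\pi_k(\phi_{j,k}(a))]\,\xi_k\| \leq M\|\xi_k\| = M\|\xi\|.
\]
Density of $\mathscr{D}$ in $\H$ then yields a bounded extension of $[D,\pi(\phi_j(a))]$ to all of $\H$.

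There is no real obstacle; the only subtle point worth mentioning is the well-definedness of the estimate when the same $\xi\in\mathscr{D}$ admits several representations $\xi=I_k\xi_k$, but this is immediate because $I_k$ is an isometry, the bound $M$ is uniform in $k$, and the intertwining relation is compatible with the connecting maps $I_{\ell,k}$.
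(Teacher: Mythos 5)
Your proof is correct. The forward direction coincides with the paper's: both read the bound $\|[D_k,\pi_k(\phi_{j,k}(a))]\|\leq\|[D,\pi(\phi_j(a))]\|$ directly off the compression identity \eqref{firstcommutequ}. For the converse, however, you take a genuinely different and more economical route. The paper first reduces to selfadjoint $a$ (so that each $P_k$ commutes with the commutator) and then reassembles $[D,\pi(\phi_j(a))]-[D,\pi(\phi_j(a))]P_j$ as the orthogonal block sum $\sum_{k\geq j}(P_{k+1}-P_k)\,[D,\pi(\phi_j(a))]\,(P_{k+1}-P_k)$, whose summands are uniformly bounded and mutually orthogonal, so the sum is bounded. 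You instead estimate the commutator directly on the dense domain $\mathscr{D}$ of \eqref{dinfty}: writing $\xi=I_k\xi_k$ with $k\geq j$ and using the intertwining $[D,\pi(\phi_j(a))]I_k=I_k[D_k,\pi_k(\phi_{j,k}(a))]$ together with the fact that $I_k$ is an isometry gives $\|[D,\pi(\phi_j(a))]\xi\|\leq M\|\xi\|$ in one line. What your version buys is the elimination of the selfadjointness reduction and of the convergence argument for the block sum; what the paper's version buys is an identity stated on all of $Dom(D)$ rather than only on $\mathscr{D}$. Since $\mathscr{D}$ is a core for $D$, the standard closure argument shows that the bounded extension obtained from $\mathscr{D}$ agrees with the commutator on its full natural domain, so nothing is lost -- and neither proof spells out this last point explicitly, so the two are at the same level of rigor there.
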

\begin{proof}$(\Rightarrow)$ Using \eqref{firstcommutequ}, we have $\|[D_k,\pi_k(\phi_{j,k}(a))]\|\leq \|[D,\pi(\Amor_j(a))]\|,$ for all $k\geq j, $
which proves one direction.

$(\Leftarrow)$ Without loss of generality, we can assume that $a\in A_j^\infty$ is selfadjoint. For every $k\geq j$, the operator $[D,\pi(\phi_j(a))]P_k=P_k[D,\pi(\Amor_j(a))]P_k$ is bounded by  \eqref{firstcommutequ}. Furthermore, since $$\|(P_{k+1}-P_k) [D,\pi(\phi_j(a))] (P_{k+1}-P_k)\|\leq %\|P_{n+1} [D,\pi(\Amor_j(a))] P_{n+1}\|
%\leq \|I_{n+1}{}^* [D,\pi(\Amor_j(a))] I_{n+1}\|=
 \|[D_k,\pi_k(\phi_{j,k}(a))]\|,$$ we obtain that the family of bounded operators $$\{(P_{k+1}-P_k) [D,\pi(\phi_j(a))] (P_{k+1}-P_k)\}_{k\geq j}$$ is uniformly bounded. In particular, since $(P_{l+1}-P_l)(P_{k+1}-P_k)=0$ for every $l>k$, the operator $\sum_{k\geq j}(P_{k+1}-P_k) [D,\pi(\phi_j(a))] (P_{k+1}-P_k)$ is well-defined and bounded. Moreover, $$\sum_{k\geq j}(P_{k+1}-P_k) [D,\pi(\phi_j(a))] (P_{k+1}-P_k)=[D,\pi(\phi_j(a)] -[D,\pi(\phi_j(a))]P_j$$ on the domain of $D$, as one can readily see. 
\iffalse
\begin{eqnarray*}
[D,\pi(\Amor_j(a)] -[D,\pi(\phi_j(a))]P_j &=& \Big(\sum_{n=j}^\infty (P_{n+1}-P_n)\Big)[D,\pi(\phi_j(a))] \\
&=& \sum_{n=j}^\infty (P_{n+1}-P_n) [D,\pi(\phi_j(a))]\\
&=&  \sum_{n=j}^\infty  (P_{n+1}-P_n) [D,\pi(\phi_j(a))] (P_{n+1}-P_n)
\end{eqnarray*}
\fi
Therefore $[D,\pi(\phi_j(a)]$ is bounded, which concludes the proof.

\end{proof}
The following corollary follows directly from the previous proposition, and from the fact that the algebra $A_\infty$ is dense in $A$.
\begin{corollary}\label{corfin}
Suppose that  the family of operators $\{[D_k,\pi_k(\phi_{j,k}(a))]\}_{k\geq j}$ is uniformly bounded, for every  $a\in A_j^\infty$ and $j\in\mathbb{N}$. Then the operator $D$ satisfies condition (ST2).
\end{corollary}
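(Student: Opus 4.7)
The plan is to derive the corollary as a direct consequence of Theorem \ref{boundedcomutant} together with the density of the subalgebra $A_\infty = \bigcup_{j\in\mathbb{N}} \phi_j(A_j^\infty)$ inside $A$. Concretely, I would show that under the uniform boundedness hypothesis one has $A_\infty \subseteq A^\infty$, and then that $A_\infty$ is itself dense in $A$, from which (ST2) follows immediately.

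For the first inclusion, fix $j$ and $a \in A_j^\infty$. The discussion preceding Theorem \ref{boundedcomutant} already shows that $\pi(\phi_j(a))$ preserves the dense subspace $\mathscr{D}$ of \eqref{dinfty}, so $[D,\pi(\phi_j(a))]$ is automatically densely defined. Applying the nontrivial direction of Theorem \ref{boundedcomutant}, the uniform boundedness of $\{[D_k,\pi_k(\phi_{j,k}(a))]\}_{k\geq j}$ upgrades this commutator to a bounded operator on $\H$. Hence $\phi_j(A_j^\infty) \subseteq A^\infty$ for every $j$, and therefore $A_\infty \subseteq A^\infty$.

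For the density of $A_\infty$ in $A$, I would argue in two steps. First, the compatibility condition (1) in Definition \ref{morph1} gives $\phi_{j,k}(A_j^\infty) \subseteq A_k^\infty$ for $j \leq k$, so the family $\{\phi_j(A_j^\infty)\}_j$ is increasing and $A_\infty$ is genuinely a $*$-subalgebra of $A$. Second, for each $j$ the set $A_j^\infty$ is dense in $A_j$ by (ST2) for the triple $(A_j,\H_j,D_j)$, and since $\phi_j$ is an injective $*$-homomorphism between $C^*$-algebras it is isometric, so $\phi_j(A_j^\infty)$ is dense in $\phi_j(A_j)$. Taking unions and using that $\bigcup_j \phi_j(A_j)$ is dense in the inductive limit $A$ yields that $A_\infty$ is dense in $A$. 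Combined with $A_\infty \subseteq A^\infty$, this gives density of $A^\infty$ in $A$, which is condition (ST2).

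I do not anticipate any serious obstacle: the content of the corollary is essentially a repackaging of Theorem \ref{boundedcomutant} once one observes that the union of the local smooth subalgebras is dense. The only point that requires a small check is the increasing-family observation, which is what makes $A_\infty$ an algebra rather than just a set, and this is handed to us by the morphism axiom.
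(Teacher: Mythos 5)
Your proposal is correct and follows the same route as the paper, which simply states that the corollary follows from Theorem \ref{boundedcomutant} together with the density of $A_\infty=\bigcup_j\phi_j(A_j^\infty)$ in $A$. You have merely spelled out the details the paper leaves implicit (that $A_\infty$ is an increasing union and that its density follows from (ST2) for each $(A_j,\H_j,D_j)$ plus isometry of the $\phi_j$), so there is nothing to add.
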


 At this the end of the paper, we test the effectiveness of the results obtained in Theorem \ref{Diracprop} and Corollary \ref{corfin}  on the two examples discussed in section two. Of course, in both cases the properties (ST1) and (ST2) can be directly verified. We will use the same notation as the one in Example \ref{Co} and Example \ref{ChIv}.
 \subsection*{Example \ref{Co} revisited} {\em Condition (ST1)}. For every $\lambda\in\mathbb{C}\setminus\mathbb{R}$, one has $$\|I_jR_\lambda(D_j)I_j^\ast-R_\lambda(D)\|=\sup_{k> j}\frac{1}{|\ell_k^{-1}-\lambda|} \leq \ell_j,$$ and since $\ell_j\to 0$, we obtain from Theorem \ref{Diracprop} that $D$ has compact resolvent.\\
 {\em Condition (ST2)} is automatically satisfied, because $A_\infty\subset A^\infty$, by construction.
 \subsection*{Example \ref{ChIv} revisited}  {\em Condition (ST1)}. We first notice that the operator $D=\limind D_j$ is bounded iff the sequence $\{\alpha_j\}_{j\in\mathbb{N}}$  is bounded, because $\|D_j\|=\max\{|\alpha_i|\,|\,i\leq j\}$, for every $j\in\mathbb{N}$.  Therefore, in this case, the operator $D$ can not have compact resolvent, unless the $C^*$-algebra $A$ is finite dimensional. If the sequence  $\{\alpha_j\}_{j\in\mathbb{N}}$ is unbounded, then using  Theorem \ref{Diracprop} we obtain that the operator $D$ has compact resolvent if and only if$$\|I_jR_\lambda(D_j)I_j^\ast-R_\lambda(D)\|=\sup_{k> j}\frac{1}{|\alpha_k-\lambda|} \to 0,$$ for $\lambda\in\mathbb{C}\setminus\mathbb{R}$, 
 which is equivalent to $|\alpha_j|\to \infty $ as $j\to \infty$.\\
 {\em Condition (ST2)} is always satisfied. Indeed, if $j\in\mathbb{N}$ and $a\in A_j^\infty=A_j$, then $[P_k,\pi(a)]=0$, for every $k\geq j$, and thus $[D_k,\pi(a)]=[D_j,\pi(a)]$. Since $[D_k,\pi(a)]\restriction_{\H_k}=[D_k,\pi_k(a)]\restriction_{\H_k}$, it follows that $$\|[D_k,\pi_k(a)]\restriction_{\H_k}\|\leq \|[D_j,\pi(a)] \|,$$ for every $k\geq j$, and the conclusion follows from Corollary \ref{corfin}.\\\\
We close this work by pointing out that the inductive realizations of inductive systems of {\em finite dimensional} spectral triples have great potential for classification and direct description as spectral triples; this is also due to the fact that the class of finite dimensional spectral triples is perfectly classifiable in terms of the Krajewski diagrams \cite{Krajewski1998} (see also \cite{vanSuijlekom2015}). We intend to discuss these issues in a future work.
\begin{bibdiv}
\begin{biblist}
\bib{AGN}{article}{
   author={Aastrup, Johannes},
   author={Grimstrup, Jesper M\o ller},
   author={Nest, Ryszard},
   title={On spectral triples in quantum gravity. II},
   journal={J. Noncommut. Geom.},
   volume={3},
   date={2009},
   number={1},
   pages={47--81},
  % issn={1661-6952},
   %review={\MR{2457036}},
}
\bib{AGN1}{article}{
   author={Aastrup, Johannes},
   author={Grimstrup, Jesper M\o ller},
   author={Nest, Ryszard},
   title={On spectral triples in quantum gravity. I},
   journal={Classical Quantum Gravity},
   volume={26},
   date={2009},
   number={6},
   pages={065011, 53},
  % issn={0264-9381},
   %review={\MR{2486339}},
}
\bib{Bertozzini2012}{article}{
   author={Bertozzini, Paolo},
   author={Conti, Roberto},
   author={Lewkeeratiyutkul, Wicharn},
   title={Categorical non-commutative geometry},
   journal={Journal of Physics: Conference Series},
   volume={346},
   date={2012},
   %number={2},
   pages={012003},
   %issn={0030-6126},
   %review={\MR{2262338}},
}

\bib{CPS}{article}{
   author={Carey, A. L.},
   author={Phillips, J.},
   author={Sukochev, F. A.},
   title={On unbounded $p$-summable Fredholm modules},
   journal={Adv. Math.},
   volume={151},
   date={2000},
   number={2},
   pages={140--163},
  % issn={0001-8708},
  % review={\MR{1758245}},
}

\bib{Christensen-Ivan2006}{article}{
   author={Christensen, Erik},
   author={Ivan, Cristina},
   title={Spectral triples for AF $C^*$-algebras and metrics on the Cantor
   set},
   journal={J. Operator Theory},
   volume={56},
   date={2006},
   number={1},
   pages={17--46},
   issn={0379-4024},
   %review={\MR{2261610}},
}

\bib{Connes1994}{book}{
   author={Connes, Alain},
   title={Noncommutative geometry},
   publisher={Academic Press, Inc., San Diego, CA},
   date={1994},
   pages={xiv+661},
   isbn={0-12-185860-X},
   %review={\MR{1303779}},
}

\bib{Connes1989}{article}{
   author={Connes, A.},
   title={Compact metric spaces, Fredholm modules, and hyperfiniteness},
   journal={Ergodic Theory Dynam. Systems},
   volume={9},
   date={1989},
   number={2},
   pages={207--220},
   %issn={0143-3857},
   %review={\MR{1007407}},
   %doi={10.1017/S0143385700004934},
}

\bib{Connes-Marcolli}{article}{
   author={Connes, Alain},
   author={Marcolli, Matilde},
   title={A walk in the noncommutative garden},
   conference={
      title={An invitation to noncommutative geometry},
   },
   book={
      publisher={World Sci. Publ., Hackensack, NJ},
   },
   date={2008},
   pages={1--128},
   %review={\MR{2408150}},
   %doi={10.1142/9789812814333_0001},
}

 \bib{Krajewski1998}{article}{
   author={Krajewski, Thomas},
   title={Classification of finite spectral triples},
   journal={J. Geom. Phys.},
   volume={28},
   date={1998},
   number={1-2},
   pages={1--30},
   issn={0393-0440},
   %review={\MR{1653118}},
   %doi={10.1016/S0393-0440(97)00068-5},
}

\bib{Lai2013}{article}{
   author={Lai, Alan},
   title={The JLO character for the noncommutative space of connections of
   Aastrup-Grimstrup-Nest},
   journal={Comm. Math. Phys.},
   volume={318},
   date={2013},
   number={1},
   pages={1--34},
   issn={0010-3616},
   %review={\MR{3017062}},
   %doi={10.1007/s00220-013-1665-6},
}

\bib{Reed-Simon1980}{book}{
   author={Reed, Michael},
   author={Simon, Barry},
   title={Methods of modern mathematical physics. I. Functional analysis},
   publisher={Academic Press, New York-London},
   date={1972},
   pages={xvii+325},
   %review={\MR{0493419}},
}

\bib{vanSuijlekom2015}{book}{
   author={van Suijlekom, Walter D.},
   title={Noncommutative geometry and particle physics},
   series={Mathematical Physics Studies},
   publisher={Springer, Dordrecht},
   date={2015},
   pages={xvi+237},
   isbn={978-94-017-9161-8},
   isbn={978-94-017-9162-5},
   %review={\MR{3237670}},
   %doi={10.1007/978-94-017-9162-5},
}
	
\end{biblist}
\end{bibdiv}

 %\bibliographystyle{amsplain}
%\bibliography{References}

\end{document}